\newtheorem{Theorem}[equation]{Theorem}
\newtheorem{Corollary}[equation]{Corollary}
\newtheorem{Lemma}[equation]{Lemma}
\newtheorem{Proposition}[equation]{Proposition}
\theoremstyle{definition}
\newtheorem{Definition}[equation]{Definition}
\newtheorem{Example}[equation]{Example}
\theoremstyle{remark}
\newtheorem{Remark}[equation]{Remark}
\numberwithin{equation}{section}
\numberwithin{figure}{section}
\newcommand{\PP}{{\mathbb P}}
\newcommand{\Q}{{\mathbb Q}}
\newcommand{\N}{{\mathbb N}}
\newcommand{\mb}[1]{\mathbb{#1}}
\newcommand{\mc}[1]{\mathcal{#1}}
\newcommand{\mt}[1]{\text{ #1}}
\newcommand{\mf}[1]{\mathfrak{#1}}
\newcommand{\taban}[1]{ \lfloor #1 \rfloor }
\DeclareMathOperator{\D}{D}
\newcommand{\flag}{\mathscr{B}}
\DeclareMathOperator{\Sym}{Sym}
\DeclareMathOperator{\Skew}{Skew}
\DeclareMathOperator{\Mat}{Mat}
\begin{document}

\title{Unipotent Invariant Matrices}
\author[1]{Mahir Bilen Can}
\author[2]{Roger Howe}
\author[3]{Michael Joyce}
\affil[1]{Department of Mathematics, Tulane and Yale Universities}
\affil[2]{Department of Mathematics, Yale University}
\affil[3]{Department of Mathematics, Tulane University}

	\date{\today}
	\maketitle

\begin{abstract}
We describe the variety of fixed points of a unipotent operator acting on the space of matrices.  
We compute the determinant and the rank of a generic (symmetric, or anti-symmetric) matrix in the fixed variety, 
yielding information about the generic singular locus of the corresponding bilinear form.
\end{abstract}

{\em Keywords: Symmetric and skew-symmetric matrices, Springer fibers, $\mf{sl}_2$-weights.}

{\em MSC-2010: 15B10, 05E18, 22E46}

\section{\textbf{Introduction}}

The theory of Springer fibers, perceived by Grothendieck and developed in the papers \cite{Springer76}, 
\cite{Springer78} of Springer, as well as in the papers of Steinberg \cite{Steinberg76} and Spaltenstein \cite{Spaltenstein77} 
is a pinnacle of geometric representation theory. Through the work of Lusztig and others it reflects the most 
intricate aspects of algebraic combinatorics. See \cite{Lusztig81}, for example. 
Concisely, a {\em Springer fiber} associated with a unipotent element $u$ in an algebraic group $G$ 
with a Borel subgroup $B\subset G$ is a closed subset of the form $X^u=\{ F\in G/B:\ u \cdot F = F \}$.
In fact, a Springer fiber is a fiber in a resolution of singularities of the unipotent variety of $G$.

In the main special case when the group is $G=\mt{SL}_n$ over an algebraically 
closed field $\mb{K}$ of characteristic $0$, it is possible to analyze the underlying set of points of $X^u$ explicitly, 
using flags of vector subspaces. 
Let $\flag$ denote the flag variety comprised of flags of the form 
$$
\mc{F}: V_1 \subset V_2 \subset  \dots \subset V_{n-1} \subset V_n = \mb{K}^n,\ \text{where}\ \dim V_i = i\ (i=1,\dots, n).
$$ 
Then for each $i\geq 0$, the cohomology space $H^{i}(\flag^u, \Q )$ affords a representation of the 
symmetric group $S_n$, and the top non-vanishing cohomology space is an irreducible $S_n$-module 
(\cite{Springer76}, \cite{Springer78}). Furthermore, all irreducible components of $\flag^u$ have the same dimension 
$e(u)$ which is equal to the half of the dimension of the top non-vanishing cohomology space 
$H^{2e(u)} (\flag^u,\Q)$, \cite{Spaltenstein77}. These leads to striking combinatorial results as 
in \cite{HottaShimomura,Steinberg88}.

In this paper, by replacing flags of vector subspaces with matrices, we investigate the fixed locus of a unipotent operator 
acting on spaces of bilinear forms. In some sense this is the approach of classical invariant theory, performing 
computations on a big open set. In fact, the purpose of our undertaking is two-fold, first of which is 
to compute explicitly the highest weight vectors of the Lie algebra $\mf{sl}_2$ acting on the space of 
matrices via the induced action of an embedding $SL_2 \hookrightarrow SL_n$, where $g \in SL_n$ acts by $g \cdot A = (g^{-1})^\mathsf{T} A g^{-1}$.

The second is to prepare the ground for extending the above mentioned combinatorial 
results to the case of certain compactifications of classical symmetric pairs $\mt{SL}_{n} / \mt{SO}_n$ and 
$\mt{SL}_{2n} / \mt{Sp}_{2n}$. 
Here, $\mt{SO}_n$ is the special orthogonal group consisting of operators $x\in \mt{SL}_{2n}$ 
satisfying $x x^\top = id_n$, and $\mt{Sp}_{2n} \subset \mt{SL}_{2n}$ is the symplectic subgroup consisting of 
operators preserving the skew-symmetric form 
\begin{align}\label{the skew form}
J = 
\begin{pmatrix}
0 & id_n \\
-id_n & 0
\end{pmatrix},
\end{align}
where $id_n$ is the $n\times n$ identity matrix.
In other words, $\mt{Sp}_{2n} = \{g\in \mt{SL}_{2n}:\ g^\top J g = J \}$.

As is mentioned, some deeper aspects of Springer-fibers arise in the context of character sheaves of Luzstig in which 
certain equivariant compactifications of $G$ are in need, \cite{Lusztig04}, \cite{HeLusztig06}. 
Building on the work of Lusztig, in the articles \cite{He06}, \cite{HeThomsen06}, 
He and He-Thomsen investigate the unipotent variety and its subvarieties in a wonderful compactification of $G$. 
See Springer's I.C.M. report \cite{Springer06}, also. 
Although we do not get into wonderful compactifications in this manuscript, we already have some extensions of our work 
to this setting, \cite{CJ10}.

We organized our paper as follows. In Section \ref{S:Notation} we set our notation. 
In Section \ref{S:Preliminaries} we present preliminary results, in particular we find a matrix equation $U(u,x)=0$ 
determining if a matrix $x\in \Mat_n$ is fixed by the unipotent operator $u\in \mt{SL}_n$, or not.

In Section \ref{S:Fixed points}, for a fixed $u$, we describe explicitly the locus of $U(u,x)=0$. In particular, we 
compute the dimensions of the fixed loci in $\Skew_n$ and in $\Sym_n$.
In Section \ref{S:sl_2} we recognize each fixed locus as an $\mf{sl}_2$-module with its highest weight spaces
given by the matrix blocks as calculated in Section \ref{S:Fixed points}.

In Section \ref{S:Non-degenerate} we find an intriguing determinantal formula for a unipotent fixed matrix. 
Finally, in Section \ref{S:Rank} we determine exactly which symmetric (respectively skew-symmetric) unipotent 
fixed matrix is non-degenerate. 

\vspace{1cm}

\textbf{Acknowledgements} The first author is partially supported by the Louisiana Board of Regents
enhancement grant.  We thank Joseph Silverman for helpful conversations and the anonymous 
referee for their careful reading and suggestions for improvement.

\section{\textbf{Notation}}
\label{S:Notation}

We use $\N$ to denote non-negative integers, and if $n\in \N$ is non-zero, 
then we denote by $[n]$ the set $\{1,2,\dots, n\}$.

Throughout we denote by $\mathbb{K}$ an algebraically closed field of characteristic $0$ and 
make the assumption that all vector spaces are over $\mb{K}$. Our basic list of notation is as follows:

\begin{eqnarray*}
\Mat_n: &&\ \text{space of}\ n\times n \ \text{matrices}\\
%\mt{GL}_n \subset \Mat_n: &&\ \text{invertible matrices}\\
\mt{SL}_n \subset \Mat_n: &&\ \text{invertible matrices with determinant 1}\\
\Sym_n: &&\ \text{space of}\ n\times n \ \text{symmetric matrices}\\
\Sym_n^0 \subset \Sym_n: &&\  \text{invertible symmetric matrices }\\
\Skew_n: &&\ \text{space of}\ n\times n \ \text{skew-symmetric matrices}\\
\Skew_n^0 \subset \Skew_n: &&\  \text{invertible skew-symmetric matrices }\\
\end{eqnarray*}
Note that $\Mat_n \cong \Sym_n \bigoplus \Skew_n$.

There is a common left action of $\mt{SL}_n$ on the spaces $\Mat_n,\Sym_n$ and $\Skew_n$ given by 
\begin{align}\label{Main Action}
g \cdot A = (g^{-1})^\mathsf{T} A g^{-1}.  
\end{align}
It follows from Cholesky's decompositions that on $\Skew_n^0$ and $\Sym_n^0$ the action (\ref{Main Action}) of $\mt{SL}_n$ 
is transitive (see Appendix B of \cite{GoodmanWallach}). 
It is easy to check that the stabilizer of $J \in \mt{Skew}_{2n}^0$ ($J$ is as in (\ref{the skew form})) is 
the symplectic subgroup $\mt{Sp}_{2n} \subset \mt{SL}_{2n}$, and the stabilizer of the identity element 
$id_n \in \mt{Sym}_n^0$ is the special orthogonal group $\mt{SO}_n:=\{ A \in \mt{SL}_n:\ A A^\top = id_n \}$. 
Thefore, the quotients $\mt{SL}_{2n}/\mt{Sp}_{2n}$ and $\mt{SL}_{n}/\mt{SO}_{n}$ are canonically identified with 
the affine varieties $\mt{Skew}_{2n}^0$ and $\mt{Sym}_n^0$, respectively.

A partition of $n$ is a non-increasing sequence of non-negative integers that sum to $n$. 
Let $\lambda = (\lambda_1, \lambda_2, \dots, \lambda_k)$ be a partition of $n$ with $\lambda_k \geq 1$.
In this case, we call $k$ the length of $\lambda$, and denote it by $\ell(\lambda)$. Entries $\lambda_i$, $i=1,\dots, k$
are called the {\em parts} of $\lambda$.
Alternatively, we write $\lambda = (1^{\alpha_1}, 2^{\alpha_2}, \dots, l^{\alpha_l})$ to indicate that 
$\lambda$ consists of $\alpha_1$ 1's, $\alpha_2$ 2's, and so on.  
Terms with zero exponent may be added and removed without altering $\lambda$.  
For example, each of $(3,3,2)$, $(3,3,2,0)$, $(1^0, 2^1, 3^2)$, and $(2^1, 3^2)$ represent the same 
partition $3 + 3 + 2$ of $8$.

Given a partition $\lambda$ of $n$ with $k$ parts, the {\em $\lambda$-decomposition} of an $n$-by-$n$ matrix 
is obtained by inserting horizontal lines after rows $\lambda_1$, $\lambda_2$, \dots, $\lambda_{k-1}$ 
and similarly inserting vertical lines after columns $\lambda_1$, $\lambda_2$, \dots, $\lambda_{k-1}$, 
thereby giving a block decomposition of the matrix. For example, here is the $5$-by-$5$ identity matrix $I_5$ 
with its $(2,1,1,1)$-decomposition:
$$
I_5 = \begin{pmatrix}
1 & 0 & \vline & 0 & \vline & 0 & \vline & 0 \\
0 & 1 & \vline & 0 & \vline & 0 & \vline & 0 \\
\hline
0 & 0 & \vline & 1 & \vline & 0 & \vline & 0 \\
\hline
0 & 0 & \vline & 0 & \vline & 1 & \vline & 0 \\
\hline
0 & 0 & \vline & 0 & \vline & 0 & \vline & 1
\end{pmatrix}
$$

\section{\textbf{Preliminaries}}
\label{S:Preliminaries}

\begin{Lemma}\label{lem:fixed point}
Let $G$ be either the additive group  $\mathbb{G}_a = \mathbb{K}^+$ 
or the multiplicative group $\mathbb{G}_m= \mathbb{K}^\times$.
Let $X$ be a complete variety on which $G$ acts,
and let $g \in G$ be an element of infinite order.
Then $X^g = X^G$.
\end{Lemma}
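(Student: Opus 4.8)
The plan is to establish the nontrivial inclusion $X^g \subseteq X^G$, the reverse inclusion $X^G \subseteq X^g$ being immediate. First I would fix a point $x \in X^g$ and consider the orbit morphism $\sigma \colon G \to X$, $h \mapsto h \cdot x$. Since the action is algebraic, $\sigma$ is a morphism of varieties, so its fiber over $x$, namely the stabilizer $G_x = \{h \in G : h \cdot x = x\} = \sigma^{-1}(x)$, is a closed subgroup of $G$. By hypothesis $g \in G_x$, hence the cyclic subgroup $\langle g \rangle$ generated by $g$ is contained in $G_x$.

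The heart of the argument is that $\langle g \rangle$ is Zariski dense in $G$. Since $G$ is isomorphic, as a variety, to $\mathbb{A}^1$ (in the additive case) or to $\mathbb{A}^1 \setminus \{0\}$ (in the multiplicative case), every infinite subset of $G$ is dense; so it suffices to know that $\langle g \rangle$ is infinite. Over a field of characteristic zero this is precisely the assumption that $g$ has infinite order: $\langle g \rangle = \mathbb{Z} g$ in the additive case and $\langle g \rangle = \{g^m : m \in \mathbb{Z}\}$ in the multiplicative case, each infinite. Therefore $\overline{\langle g \rangle} = G$, and since $G_x$ is closed and contains $\langle g \rangle$ we conclude $G_x = G$, i.e. $x$ is fixed by all of $G$, so $x \in X^G$.

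The one step I would want to state carefully — the genuine content of the lemma — is the density claim, which reduces to the standard fact that a proper closed subgroup of $\mathbb{G}_a$ or $\mathbb{G}_m$ over a field of characteristic zero is finite (trivial in the $\mathbb{G}_a$ case, cyclic of finite order in the $\mathbb{G}_m$ case). Everything else is formal. I would also observe that completeness of $X$ is not actually used anywhere in this argument; all that is needed is that the action is a morphism, so that $G_x$ is closed. The completeness hypothesis is retained because it is the setting in which the lemma is subsequently applied, and because it costs nothing.
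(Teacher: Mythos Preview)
Your proof is correct and follows the same underlying idea as the paper's: consider the orbit map $G \to X$, $t \mapsto t\cdot x$, observe that its fiber over $x$ contains the infinite set $\langle g\rangle$, and conclude that the map is constant. The paper packages this slightly differently: it uses completeness of $X$ to extend the orbit map to a morphism $\phi\colon \mathbb{P}^1 \to X$, and then invokes the fact that a morphism from $\mathbb{P}^1$ with an infinite fiber must be constant. Your version bypasses this extension by noting directly that the stabilizer $G_x = \sigma^{-1}(x)$ is a closed subset of the irreducible curve $G$ containing an infinite set, hence equals $G$. Your observation that completeness is not actually needed is correct and is a genuine (if minor) sharpening; the paper's route through $\mathbb{P}^1$ is unnecessary overhead. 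One small point: the infiniteness of $\langle g\rangle$ follows immediately from the hypothesis that $g$ has infinite order, with no appeal to characteristic zero required at that step; your argument in fact uses $\text{char }\mathbb{K}=0$ nowhere essential.
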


%\begin{Remark}
%We thank Joseph Silverman for showing us a counterexample to Lemma \ref{lem:fixed point} in characteristic $p$.
%\end{Remark}
% Remark omitted because counterexample does not apply to lemma as stated, as noted by referee. - Mike
	
\begin{proof}
Clearly $X^G \subset X^g$.  Suppose $x \in X^g$. Then $x \in X^{g^n}$ for any $n \in \mathbb{Z}$. 
Consider the map $G \rightarrow X$ given by $t \mapsto t \cdot x$.  Since $X$ is complete, 
this map extends to a morphism $\phi: \mathbb{P}^1\rightarrow X$. 
Since $\text{char } \mathbb{K} = 0$, $\phi^{-1}(x)$ is infinite, and therefore the image of $\phi$ is a point.  Thus, $x \in X^G$.
\end{proof}

\begin{comment}

Note 1: 
If char (K)=p > 0, the argument of the proof of the lemma does not make sense, 
at least for G_a, because it is always a finite group.

Note 2: 
The lemma is false if char(K) = p > 0. Here's a simple
example. Take X = P^1 and define

  F : G_a ---> End(P^1),  F(t) : [x,y] ----> [x+(t^p-t)y,y].

Then F(1) is the identity map, so F(1) fixes every point.
On the other hand, F(t) fixes [x,y] if and only if either
t is in Z/pZ or y = 0. In particular,

P = [1,1] is fixed by F(1), but it is not fixed by F(t) if
t is not in Z/pZ.

\end{comment}

\begin{Corollary}\label{cor:fix pt}
Let $N$ be a nilpotent matrix with entries in $\mathbb{K}$, $u = \exp N$, 
and $U = \{ \exp(t N) : t \in \mathbb{K} \}$. 
If $X$ is any complete variety on which $U$ acts, then $X^u = X^U$.
\end{Corollary}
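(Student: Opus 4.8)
The idea is simply to reduce the corollary to Lemma \ref{lem:fixed point} by exhibiting a suitable one-parameter group action and an infinite-order element inside it. Since $N$ is nilpotent, $\exp(tN) = \sum_{j \geq 0} t^j N^j / j!$ is a polynomial in $t$ (the sum terminates), so the assignment $t \mapsto \exp(tN)$ is a morphism of algebraic groups $\mathbb{G}_a \to SL(V)$; its image is precisely $U$. Composing with the given $U$-action on $X$ endows $X$ with an action of $\mathbb{G}_a = \mathbb{K}^+$. This is the action to which we will apply Lemma \ref{lem:fixed point}.

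First I would check that $U$ really is a group under multiplication and that $t \mapsto \exp(tN)$ is a homomorphism: because $N$ commutes with itself, $\exp(sN)\exp(tN) = \exp((s+t)N)$, so $U$ is the image of $\mathbb{G}_a$ and is a (possibly trivial) copy of $\mathbb{G}_a$. Next I would handle the degenerate case $N = 0$: then $u = I$, $U = \{I\}$, and $X^u = X = X^U$ trivially. So assume $N \neq 0$. Then $1 \in \mathbb{K}$ is an element of infinite order in $\mathbb{K}^+$ (characteristic zero), and I must observe that the corresponding group element $\exp(1 \cdot N) = \exp N = u$ acts on $X$ with the same fixed locus as the abstract group element $1 \in \mathbb{G}_a$ — this is immediate since the $\mathbb{G}_a$-action on $X$ is defined by pulling back the $U$-action along $t \mapsto \exp(tN)$, so $1 \in \mathbb{G}_a$ acts exactly as $u \in U$ does, and $g \in \mathbb{G}_a$ acts as $\exp(gN) \in U$.

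With these identifications in place, Lemma \ref{lem:fixed point} applied to the complete variety $X$ with its $\mathbb{G}_a$-action and the infinite-order element $g = 1$ gives $X^{1} = X^{\mathbb{G}_a}$, i.e. $X^u = X^U$, since the $\mathbb{G}_a$-fixed locus is by construction the $U$-fixed locus and the $1$-fixed locus is the $u$-fixed locus. The only subtlety, and the single point that genuinely needs the hypotheses, is the passage from $\exp N$ having infinite order to $1 \in \mathbb{K}^+$ having infinite order; this is where $N \neq 0$ (so that $t \mapsto \exp(tN)$ is injective, hence $U \cong \mathbb{G}_a$) and $\operatorname{char}\mathbb{K} = 0$ are used. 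There is no real obstacle here — the corollary is essentially a direct translation of the lemma — so the "proof" is mostly bookkeeping: verifying that $\exp(tN)$ is polynomial in $t$, that it defines a $\mathbb{G}_a$-action, and that the fixed-point loci match up under this identification.
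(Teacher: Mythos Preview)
Your proposal is correct and is exactly the approach the paper intends: the corollary is stated without proof immediately after Lemma~\ref{lem:fixed point}, and your argument supplies precisely the bookkeeping needed to invoke that lemma---identifying $U$ with $\mathbb{G}_a$ via $t\mapsto\exp(tN)$ (when $N\neq 0$), taking $g=1$ as the infinite-order element, and handling the trivial case $N=0$ separately.
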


\begin{Proposition}\label{P:main condition} 
Let $N$ be a nilpotent matrix and let $u = \exp(N)$. For $A\in \Mat_n$ 
the following are equivalent:
\begin{enumerate}
\item $A$ is fixed by $u$;
\item $AN + N^\mathsf{T} A = 0$.
\end{enumerate}
\end{Proposition}

\begin{proof}
Consider the one-dimensional unipotent subgroup of $\mt{SL}_n$ given by
$$
U = \{ \exp(tN) : t \in \mathbb{K} \}.
$$
Clearly $U$ acts on the projective space $\PP(\Mat_n)$.
By Corollary \ref{cor:fix pt}, $\PP(\Mat_n)^u = \PP(\Mat_n)^U$.

To find fixed points of the subgroup $U$, we seek solutions to the equations
\begin{equation}\label{eq:fixed point condition}
\exp(-t N)^\mathsf{T} A \exp(-t N) = A
\end{equation}
for all $t \in \mathbb{K}$.
Viewing this equation in the ring of $n$-by-$n$ matrices with coefficients in $\mathbb{K}[t]$, 
differentiating with respect to $t$ and then setting $t = 0$, we obtain
\begin{equation}\label{eq:NA+AN^T=0}
AN + N^\mathsf{T} A= 0.
\end{equation}

Conversely, assume that (\ref{eq:NA+AN^T=0}) holds.  Then an easy induction shows that
\begin{equation}\label{eq:N^k formula}
A N^k = (-1)^k (N^{\mathsf{T}})^k A
\end{equation}
for all $k \geq 0$.  Expanding $\exp(t N)$ as a polynomial in $N$ and using (\ref{eq:N^k formula}) gives
$$
A \exp(t N)  = \exp(-t N^{\mathsf{T}}) A,
$$
which is equivalent to (\ref{eq:fixed point condition}).
\end{proof}

\begin{Remark}
Note that the Jordan type of $u = \exp(N)$ is the same as
the Jordan type of $N$, as a simple row reduction argument shows.
\end{Remark}

We state the following elementary result without proof.

\begin{Lemma}\label{lem:conjugation}
Suppose that $N$ and $N'$ are two nilpotent matrices that are conjugate in $\mt{SL}_n$, say $N' = S N S^{-1}$.
Define $u = \exp(N)$ and $u' = \exp(N')$.  Then $u' = S u S^{-1}$ and the fixed point loci of $u$ and $u'$ are isomorphic via $A \mapsto (S^{-1})^{\mathsf{T}} A S^\mathsf{T}$.  
\end{Lemma}

%\begin{proof}
%We use the criterion of Proposition \ref{P:main condition}
%A simple calculation shows
%\begin{align*}
%N A + A N^\mathsf{T} = 0 & \Leftrightarrow S (N A + A N^\mathsf{T}) S^\mathsf{T} = 0\\
%& \Leftrightarrow N' S A S^\mathsf{T} + S A S^\mathsf{T} N'^{\mathsf{T}} = 0.
%\end{align*}
%Moreover, $\det A \neq 0 \Leftrightarrow \det S A S^\mathsf{T} \neq 0$.
%\end{proof}
% Proof omitted because there is a trivial proof valid in any characteristic, as noted by referee. - Mke

It follows from the above discussions that the fixed point space $X^u$ for $X= \Mat_n, \Sym_n$, 
or $X=\Skew_n$ depend only on the Jordan type of $u$. Equivalently, 
it depends only on the Jordan type of a nilpotent matrix $N$ for which $u = \exp(N)$.

The Jordan classes of $n$-by-$n$ nilpotent matrices are in
bijection with the partitions of $n$.  Indeed, let $N_{(p)}$ be the $p$-by-$p$ matrix
$$N_{(p)} = \left( \begin{matrix}
0 & 1 & 0 & \cdots & 0 & 0\\
0 & 0 & 1 & \cdots & 0 & 0\\
\vdots & \vdots & \vdots & \ddots & \vdots & \vdots\\
0 & 0 & 0 & \cdots & 0 & 1\\
0 & 0 & 0 & \cdots & 0 & 0
\end{matrix} \right).$$
Then the above correspondence associates a partition
$\lambda = (\lambda_1, \lambda_2, \dots, \lambda_k$),
to the Jordan matrix $N_{\lambda}$ given in block form by
$$
N_{\lambda} = \left( \begin{matrix}
N_{\lambda_1} & 0 & \cdots & 0\\
0 & N_{\lambda_2} & \cdots & 0\\
\vdots & \vdots & \ddots & \vdots\\
0 & 0 & \cdots & N_{\lambda_k}
\end{matrix} \right).
$$
From now on we write $X^\lambda$ in place of $X^u$, if $\lambda$ is the Jordan type of $u$.

\section{Fixed points}
\label{S:Fixed points}

For $k \in [n]$, let $N_{(k)}$ denote the $k\times k$ principal nilpotent matrix with a single Jordan-block.
Suppose $A = (a_{i,j})_{ {i=1,\dots, k} \atop {j=1,\dots,n}}$ is a generic $k\times n$ matrix.
Then it is easy to verify that $AN_{(n)} + N_{(k)}^\mathsf{T} A= 0$ if and only if $A$ has the following form:
\begin{align}\label{A: k less than n}
 \begin{pmatrix}
0 & \cdots   & 0 & 0 &0 &\cdots &0 & 0 & a_{1,n} \\
0 & \cdots   & 0 & 0 &0 & \cdots  & 0 & -a_{1,n} & a_{2,n} \\
0 & \cdots   & 0 & 0 & 0 & \cdots &  a_{1,n} & -a_{2,n} & a_{3,n} \\
\vdots & \cdots  & \vdots & \vdots & \vdots & \iddots & \vdots & \vdots & \vdots \\
0 & \cdots & 0 & 0 & (-1)^{n-1} a_{1,n} &\cdots & a_{k-4,n} & -a_{k-3,n} & a_{k-2,n}\\
0 &\cdots & 0 & (-1)^na_{1,n} &(-1)^{n-1} a_{2,n}&  \cdots & a_{k-3,n} & -a_{k-2,n} & a_{k-1,n}\\
0 & \cdots & (-1)^{n+1} a_{1,n} & (-1)^n a_{2,n} & (-1)^{n-1} a_{3,n} & \cdots & a_{k-2,n} & -a_{k-1,n} & a_{k,n}
\end{pmatrix}.
\end{align}

If the roles of $k$ and $n$ are interchanged, that is to say, if $k > n$, then 
$AN_{(n)} + N_{(k)}^\mathsf{T} A= 0$ if and only if $A$ has the following form:
\begin{align}\label{A: k greater than n}
\begin{pmatrix}
0  & 0 & \cdots & 0 & 0 &0 & 0 \\
\vdots & \vdots & \vdots & \vdots & \vdots & \vdots& \vdots \\
%0  & 0 & \cdots & 0 & 0 & 0 & 0 \\
0  & 0 &\cdots & 0 & 0 & 0 & a_{k,k} \\
0  & 0 &\cdots & 0 & 0 & -a_{k,k} & a_{k,k+1} \\
0  & 0 &\cdots & 0 & a_{k,k} & -a_{k,k+1} & a_{k,k+2} \\
\vdots & \vdots& \iddots & \vdots & \vdots & \vdots & \vdots \\
0 & 0  & \cdots & - a_{k,k+1} & a_{k,k} & -a_{k,k+1} & a_{k,n} \\
0 & (-1)^k a_{k,k+1}  & \cdots & -a_{k,k-1} & a_{k,k} & -a_{k,k+1} & a_{k,n} \\
(-1)^{k+1}a_{k,k}& (-1)^k a_{k,k+1}  & \cdots & -a_{k,k-1} & a_{k,k} & -a_{k,k+1} & a_{k,n} \\
\end{pmatrix}.
\end{align}

For an arbitrary matrix $B=(b_{i,j})_{i,j=1}^n$, let us call the portion of $B$ consisting of $b_{i,j}$ with $i+j = n+k$ the
{\em $k$-th anti-diagonal}. 
Proof of the following propositions are easy, so we omit them.
\begin{Proposition}\label{P:symmetric regular case}
Let $A=(a_{i,j})_{i,j=1}^n$ be a symmetric matrix satisfying $A N_{(n)} + N_{(n)}^\mathsf{T} A= 0$. Then 
\begin{enumerate}
\item if $n=2k+1$ for some $k\in \N$, then for $l\in [k]$, the $2l$-th anti-diagonal of $A$ consists of zeros only,
and furthermore, $a_{2l-1,n},-a_{2l-1,n},\cdots, (-1)^{n-2l+1} a_{2l-1,n}$ are top-to-bottom entries of $(2l-1)$-st anti-diagonal  
of $A$.
\item If $n=2k$ for some $k\in \N$, then for $l\in [k]$, the $(2l-1)$-st anti-diagonal of $A$ consists of zeros only,
and furthermore, $a_{2l,n},-a_{2l,n},\cdots, (-1)^{n-2l} a_{2l,n}$ are top-to-bottom entries of $2l$-th anti-diagonal of $A$.
\end{enumerate}
\end{Proposition}

Similarly, we have 
\begin{Proposition}\label{P:anti-symmetric regular case}
If $A=(a_{i,j})_{i,j=1}^n$ is an anti-symmetric matrix satisfying $A N_{(n)} + N_{(n)}^\mathsf{T} A= 0$, then 
\begin{enumerate}
\item if $n=2k+1$ for some $k\in \N$, then for $l\in[k+1]$, the $(2l-1)$-st anti-diagonal of $A$ consists of zeros only,
and furthermore, $a_{2l,n},-a_{2l,n},\cdots, (-1)^{n-2l} a_{2l,n}$ are top-to-bottom entries of $2l$-th anti-diagonal of $A$.

\item If $n=2k$ for some $k\in \N$, then for $l\in [k]$, the $2l$-th anti-diagonal of $A$ consists of zeros only,
and furthermore, $a_{2l-1,n},-a_{2l-1,n},\cdots, (-1)^{n-2l+1} a_{2l-1,n}$ are top-to-bottom entries of $(2l-1)$-st anti-diagonal  
of $A$.
\end{enumerate}
\end{Proposition}

\begin{Corollary}\label{C:one part partition}
\begin{enumerate}
\item If $X= \Mat_n$, then $X^{(n)} \cong \mb{A}^{n}$. 
\item If $X=\Sym_n$, then $X^{(n)} \cong \mb{A}^{\lfloor(n+1)/2\rfloor}$.
\item If $X=\Skew_n$, then $X^{(n)} \cong \mb{A}^{\lceil(n-1)/2\rceil}$.
\end{enumerate}
\end{Corollary}

\subsection{Fixed points of an arbitrary unipotent element}

Next, we consider the case of a general partition $\lambda = (\lambda_1,\dots, \lambda_k)$ of $n$ with $k$ parts. 

\begin{Lemma}\label{L:block decomposition for matrices}

Let $A \in X^\lambda$ be a generic matrix, where $X=\Mat_n$. Then $A$ has the block decomposition 
\begin{equation}\label{E: general form of a unipotent fixed matrix}
A = \begin{pmatrix}
B_{\lambda_1, \lambda_1} & \vline & B_{\lambda_1, \lambda_2} & \vline & \cdots & \vline & B_{\lambda_1, \lambda_k}\\
\hline
B_{\lambda_2, \lambda_1} & \vline & B_{\lambda_2, \lambda_2}& \vline & \cdots & \vline & B_{\lambda_2, \lambda_k}\\
\hline
\vdots & \vline & \vdots & \vline & \ddots & \vline & \vdots\\
\hline
B_{\lambda_k, \lambda_1}& \vline & B_{\lambda_k, \lambda_2} & \vline & \cdots & \vline & B_{\lambda_k,\lambda_k}
\end{pmatrix},
\end{equation}
where matrices $B_{\lambda_i, \lambda_j}$ have the form given by (\ref{A: k greater than n}), or by (\ref{A: k less than n}).
The variables occurring in the various $B_{\lambda_i, \lambda_j}$'s are all distinct.
\end{Lemma}

The proof of the following corollary is obtained by counting the variables in each block of (\ref{E: general form of a unipotent fixed matrix}).
\begin{Corollary}
Let $X= \Mat_n$. Then the dimension of $X^\lambda$ is $n + 2 \sum_{i=1}^k (i-1) \lambda_i$.
\end{Corollary}

\begin{proof}[Proof of Lemma \ref{L:block decomposition for matrices}]
Let
\begin{equation*}
A = \begin{pmatrix}
A_{\lambda_1, \lambda_1} & \vline & A_{\lambda_1, \lambda_2} & \vline & \cdots & \vline & A_{\lambda_1, \lambda_k}\\
\hline
A_{\lambda_2,\lambda_1} & \vline & A_{\lambda_2, \lambda_2}& \vline & \cdots & \vline & A_{\lambda_2, \lambda_k}\\
\hline
\vdots & \vline & \vdots & \vline & \ddots & \vline & \vdots\\
\hline
A_{\lambda_k, \lambda_1} & \vline & A_{\lambda_k, \lambda_2}& \vline & \cdots & \vline & A_{\lambda_k,\lambda_k}
\end{pmatrix}\ \text{and}\ 
N_\lambda = \begin{pmatrix}
N_{ (\lambda_1}) & \vline & 0& \vline & \cdots & \vline & 0\\
\hline
0& \vline & N_{(\lambda_2)}& \vline & \cdots & \vline & 0\\
\hline
\vdots & \vline & \vdots & \vline & \ddots & \vline & \vdots\\
\hline
0 & \vline & 0& \vline & \cdots & \vline & N_{(\lambda_k)}
\end{pmatrix} 
\end{equation*}
be the $\lambda$-decompositions of $A$ and $N_\lambda$, respectively. It follows from block-matrix multiplication that 
$AN_{\lambda} + N_{\lambda}^\mathsf{T} A= 0$ if and only 
$A_{\lambda_i,\lambda_j} N_{(\lambda_j)} + N_{(\lambda_i)}^{\mathsf{T}} A_{\lambda_i,\lambda_j} = 0$ for all $i,j \in [k]$. Therefore,
it is clear from previous subsection that $A_{\lambda_i,\lambda_j}$ have the form given 
by (\ref{A: k greater than n}), or by (\ref{A: k less than n}). The second claim is clear, also.
\end{proof}

\begin{Example}\label{ex:two cases}
We illustrate Lemma \ref{L:block decomposition for matrices}. The generic element of $X^{(2,2,1,1)}$ is
\begin{equation}\label{eqn:(2,2,1,1)}
A = \begin{pmatrix}
0 & a & \vline & 0 & c & \vline & 0 & \vline & 0 \\
-a & b & \vline & -c & d & \vline & e & \vline & f \\
\hline
0 & g & \vline & 0 & i & \vline & 0 & \vline & 0 \\
-g & h & \vline & -i & j & \vline & k & \vline & l \\
\hline
0 & m & \vline & 0 & p & \vline & r & \vline & s \\
\hline
0 & n & \vline & 0 & q & \vline & t & \vline & w
\end{pmatrix}.
\end{equation}
\end{Example}

\begin{Remark}\label{rem:generic}
We interpret the block decomposition in Lemma \ref{L:block decomposition for matrices} in two ways. 
Let $\mathcal{A}$ be the set of variables that occur in the blocks of (\ref{E: general form of a unipotent fixed matrix}). 
We can either think of (\ref{E: general form of a unipotent fixed matrix}) as an equation defining the elements of $X^{\lambda}$, 
or we can think of it as defining a particular matrix with entries in $\mathbb{K}(\mathcal{A})$, the field of rational functions.  
In the latter case, we say that $A$ is the {\it generic element} of $X^{\lambda}$.
\end{Remark}

Suppose now that $X= \Sym_n$, or $X= \Skew_n$. Let $A\in X^\lambda$ be a generic element. 
In this case, it follows from (\ref{E: general form of a unipotent fixed matrix}) that if $1\leq i\neq j \leq n$, 
then $B_{\lambda_i,\lambda_j}$ uniquely determines $B_{\lambda_j,\lambda_i}$. 
Therefore, by counting the variables in each of the (upper) blocks we arrive at the following
formulas for the dimensions of the fixed subspaces.
\begin{Corollary} Let $\lambda = (\lambda_1,\dots, \lambda_k)$ be a partition of $n$. Then 
\begin{enumerate}
\item $ \dim \Sym_n^\lambda =\sum_{i=1}^k \left\lfloor \frac{\lambda_i + 1}{2} \right\rfloor+ \sum_{i=1}^k (i-1)\lambda_i$,
\item $ \dim \Skew_n^\lambda =\sum_{i=1}^k \left\lceil \frac{\lambda_i - 1}{2} \right\rceil+ \sum_{i=1}^k (i-1)\lambda_i$.
\end{enumerate}
\end{Corollary}

\begin{Example}\label{ex:two cases}
The generic element of $\Sym_6^{(2,2,1,1)}$ is
\begin{equation}\label{eqn:(2,2,1,1)}
M = \begin{pmatrix}
0 & 0 & \vline & 0 & c & \vline & 0 & \vline & 0 \\
0 & a & \vline & -c & b & \vline & e & \vline & h \\
\hline
0 & -c & \vline & 0 & 0 & \vline & 0 & \vline & 0 \\
c & b & \vline & 0 & d & \vline & f & \vline & i \\
\hline
0 & e & \vline & 0 & f & \vline & g & \vline & j \\
\hline
0 & h & \vline & 0 & i & \vline & j & \vline & k
\end{pmatrix},
\end{equation}

while the generic element of $\Skew_6^{(3,2,1)}$ is
\begin{equation}\label{eqn:(3,3,2)}
\widetilde{M} = \begin{pmatrix}
0 & 0 & 0 & \vline & 0 & 0 & \vline & 0\\
0 & 0 & b & \vline & 0 & d & \vline & 0\\
0 & -b & 0 & \vline & -d & c & \vline & f\\
\hline
0 & 0 & d & \vline & 0 & e & \vline & 0\\
0 & -d & -c & \vline & -e & 0 & \vline & g\\
\hline
0 & 0 & -f & \vline & 0 & -g & \vline & 0\\
\end{pmatrix}.
\end{equation}

\begin{comment}
while the generic element of $\Sym_6^{(3,2,1)}$ is
\begin{equation}\label{eqn:(3,3,2)}
\widetilde{M} = \begin{pmatrix}
0 & 0 & b & \vline & 0 & 0 & \vline & 0\\
0 & -b & 0 & \vline & 0 & d & \vline & 0\\
b & 0 & a & \vline & -d & c & \vline & f\\
\hline
0 & 0 & -d & \vline & 0 & 0 & \vline & 0\\
0 & d & c & \vline & 0 & e & \vline & g\\
\hline
0 & 0 & f & \vline & 0 & g & \vline & h\\
\end{pmatrix}.
\end{equation}
\end{comment}

\end{Example}

\section{Invariant subspaces as $\mf{sl}_2$ modules}
\label{S:sl_2}

Our computations has a conceptual explanation in terms of representation theory of 
$\mf{sl}_2$, the Lie algebra of 2-by-2 traceless matrices over $\mb{K}$.

Let $V$ be a finite dimensional vector space on which $\mt{SL}_n$ acts. 
We consider the corresponding (linearized) action of $\mf{sl}_n$ on $V$. 
Let $u\in \mt{SL}_n$ be a unipotent element and let $N\in \mf{sl}_n$ be the corresponding nilpotent matrix.
By Jacobson-Morozov Theorem we view $N$ as a member of an $\mf{sl}_2$-triple. 
Thus, computing the space of $u$-fixed vectors amounts to computing the space of highest weight
vectors for the corresponding copy of $\mf{sl}_{2}$.

Let $V= \mb{K}^n$. 
There is the natural induced action of $\mt{SL}_n$ on $\mt{End}(V)= V^* \otimes V$. 
Since $V$ is self-dual as an $\mf{sl}_2$-module, we have the corresponding action of $\mf{sl}_2$ on $V\otimes V$.
which descents onto the symmetric and the skew-symmetric forms; 
\begin{align}\label{symmetric and skew-symmetric}
V \otimes V \cong S^2 (V) \oplus \wedge^2 (V).
\end{align}
Indeed, these induced actions agree with the actions of $\mt{SL}_n$ defined 
above in (\ref{Main Action}).

We are ready to prove the main result of this section:
\begin{Theorem}
Let $V$ be an $n$-dimensional vector space, $H$ be a copy of $\mt{SL}_2$ in 
$G=\mt{SL}(V)$ and let $X$ denote one of the following spaces: $V\otimes V^*$, $S^2 (V)$, or $\wedge^2(V)$. 
Then there exists a partition $\lambda = (\lambda_1,\dots, \lambda_r)$ of $n$ (depending only on the decomposition 
of $V$ as an $\mt{SL}_2$-module) such that the number $\alpha_X$ of highest weight vectors 
for the restriction $\mt{Res}^G_H X$ is 
\begin{enumerate}
\item $\alpha_X=n +2 \sum (i-1) \lambda_i$, if $X = V^*\otimes V$; 
\item $\alpha_X=\sum \left\lfloor \frac{\lambda_i + 1}{2} \right\rfloor + \sum (i-1)\lambda_i$, if $X = S^2 (V)$;
\item $\alpha_X=\sum \left\lceil \frac{\lambda_i - 1}{2} \right\rceil + \sum (i-1)\lambda_i$, if $X = \wedge^2 (V)$.
\end{enumerate}
\end{Theorem}

\begin{proof}
Any finite dimensional $\mt{SL}_2$-module is self dual. 
Since $V\otimes V$ decomposes as in (\ref{symmetric and skew-symmetric}), 
the first formula follows from the second and the third. The proof of the third formula is identical to the second, so we skip its details. 

We proceed with the proof of the second formula.
Recall that for each non-negative integer $k$, there is a unique irreducible $\mf{sl}_2$-module denoted by $U_k$
such that $\dim U_k = k+1$.
Suppose the decomposition of $V$ into its irreducible constituents 
is given by $V = \sum_{i=1}^r m_i U_{\lambda_i^0}$, where $\lambda^0_1 \geq \lambda^0_2 \geq \cdots \geq \lambda^0_r$. 
Hence, if we define $\lambda_i = \lambda_i^0 + 1$, then $\lambda = (\lambda_1^{(m_1)},\dots, \lambda_r^{(m_r)})$ becomes a partition of $n$.
(Here, by $ \lambda_i^{(m_i)}$ we mean the repetition of $\lambda_i$ $m_i$-times.)

Recall also that for vector spaces $W$ and $U$ the second symmetric power decomposes as 
\begin{align}\label{A:decomposition of S2 of W plus U}
S^2(W \oplus U) \cong S^2(W) \oplus W\otimes U \oplus S^2(U).
\end{align}
By iterating Equation (\ref{A:decomposition of S2 of W plus U}) twice, we arrive at 
\begin{align}\label{decompose S2}
S^2 (V) &\cong \sum  S^2 (m_i U_{\lambda^0_i}) \oplus \sum_{i < j} m_i m_j U_{\lambda^0_i} \otimes U_{\lambda^0_j} \notag \\
&\cong   \sum m_k S^2 (U_{\lambda^0_k}) \oplus \sum {m_k \choose 2} U_{\lambda^0_k} \otimes U_{\lambda^0_k}  
\oplus \sum_{i < j} m_i m_j U_{\lambda^0_i} \otimes U_{\lambda^0_j}.
\end{align}

We would like to compute the number of highest weight vectors in this sum. To this end, we recall another important fact.  
The Clebsch-Gordon formula states that, for any two non-negative integers $p$ and $q$,  
\begin{align}\label{A: Clebsch-Gordon}
U_p \otimes U_q \cong \sum_{0 \leq j \leq \min (p,q)} U_{p+q-2j}.
\end{align} 
In particular, if $p=q$, then 
\begin{align}\label{A: Clebsch-Gordon equal}
U_p \otimes U_p  &\cong \sum_{0 \leq j \leq p} U_{2(p-j)}  \notag \\ 
&\cong \sum_{0 \leq a \leq \taban{ \frac{p}{2}} } U_{2(p-2a)}  +\sum_{0 \leq b \leq \taban{ \frac{p}{2}} } U_{2(p-2b-1)},
\end{align}
where the first summand in the last equation is $S^2 (U_p)$ and the second summand is $\wedge^2 (U_p)$.

It follows from (\ref{A: Clebsch-Gordon}) and (\ref{A: Clebsch-Gordon equal}), respectively, that 
\begin{enumerate}
\item the number of highest weight vectors in $U_{\lambda^0_i} \otimes U_{\lambda^0_j}$ $(i < j)$ is $\lambda^0_i +1= \lambda_i$;
\item the number of highest weight vectors in $S^2 (U_{\lambda_k^0})$ is $\left\lfloor \frac{ \lambda^0_i}{2} \right\rfloor+1=
\left\lfloor \frac{ \lambda^0_i +2}{2} \right\rfloor=\left\lfloor \frac{ \lambda_i +1}{2} \right\rfloor$. 
\end{enumerate}
In other words, the total number of highest weight vectors is 
\begin{align}\label{Total number of weight vectors}
\sum m_i \left\lfloor \frac{ \lambda_i +1}{2} \right\rfloor + \sum {m_k \choose 2} \lambda_k + \sum_{ i < j} m_i m_j \lambda_j
\end{align}

The proof is finished by observing that each summand in (\ref{Total number of weight vectors}) is equivalent to 
the number of independent variables in an (upper) block of the $\lambda$-decomposition of a generic unipotent 
fixed symmetric matrix of type $\lambda$.

%Similarly, we have 
%\begin{align}\label{decompose Wedge2}
%\wedge^2 (V) &\cong   \sum m_k \wedge^2 (U_k) \oplus \sum {m_k \choose 2} U_k \otimes U_k  \oplus \sum_{i < j} m_i m_j U_i \otimes U_j.
%\end{align}

\end{proof}

\section{Non-degenerate unipotent-fixed matrices}
\label{S:Non-degenerate}

It is not immediately evident whether any of the $N_{\lambda}$-fixed matrices are smooth or not.
Our next result allows us to effectively determine this.

\begin{Theorem}\label{thm:det formula}
Let $M$ be the generic element of $\Mat^{\lambda}_n$.
Let $\mu = (\mu_1, \mu_2, \dots, \mu_l)$ be the conjugate partition of $\lambda$,
let $P$ be the matrix obtained by taking only the upper rightmost entry from each block in the $\lambda$-decomposition of $M$,
and, for $1 \leq i \leq l$, let $P_i$ be the upper left $\mu_i$-by-$\mu_i$ submatrix of $P$.  Then
\begin{align}\label{eq:det formula}
\det M = \prod_{i=1}^l \det P_i.
\end{align}
\end{Theorem}

\begin{Example}\label{E: boxed}
Before proving the theorem, we illustrate the idea of the proof with an example.
An element $M$ of $\Mat_6^{(3,3)}$ with its $\lambda$-decomposition is given by
$$
	M = \begin{pmatrix}
	0 & 0 & \framebox{$a$} & \vline & 0 & 0 & \framebox{$d$}\\
	0 & -a & b & \vline & 0  & -d & e\\
	a & -b & c & \vline & d & -e & f\\
	\hline
	0 & 0 & \framebox{$g$} & \vline & 0 & 0 & \framebox{$j$}\\
	0 & -g & h & \vline & 0 & -j & k\\
	g & -h & i & \vline & j & -k & l
	\end{pmatrix},
	$$
where the boxed entries form the matrix $P$ given in Theorem \ref{thm:det formula}.
Since the unboxed entries in the rightmost column of each block are zero,
\begin{equation*}
\det M = \det \begin{pmatrix}
a & d \\
g & j
\end{pmatrix}
\det \begin{pmatrix}
0 & \framebox{$-a$} & \vline & 0 & \framebox{$-d$}\\
a & -b & \vline & d & e\\
\hline
0 & \framebox{$-g$} & \vline & 0 & \framebox{$-j$}\\
g & -h & \vline & j & k
\end{pmatrix}
= \det \begin{pmatrix}
a & d \\
g & j
\end{pmatrix}^3.
\end{equation*}
The boxed entries in the formula give rise to the further factorization in the same way as the initial factorization was obtained.

\end{Example}

\begin{proof}[Proof of Theorem \ref{thm:det formula}]

Let $X$ denote $\Mat_n$ and let $M \in X^\lambda$. 
With respect to its $\lambda$-decomposition place a box around the upper rightmost entry of each block of $M$.  
We define two submatrices of $M$.
First, let $\D_1(M)$ be the $k$-by-$k$ submatrix obtained from the boxed entries; 
let $\D_2(M)$ be the $(n-k)$-by-$(n-k)$ submatrix obtained by removing the rows and columns 
of $M$ that 	contain boxed entries.  
In Example \ref{E: boxed},
$$
\D_1(M) = \begin{pmatrix}
a & d \\
g & j
\end{pmatrix}
\text{ and }
\D_2(M)= \begin{pmatrix}
0 & -a & \vline & 0 & -d\\
a & -b & \vline & d & e\\
\hline
0 &-g & \vline & 0 & -j\\
g & -h & \vline & j & k
\end{pmatrix}.
$$

Because all of the unboxed entries of $M$ in columns containing a boxed entry are $0$, it follows from 
the cofactor expansion that
\begin{equation}\label{E:determinantfactorization}
\det M = (-1)^{n-k} \det \D_1(M) \det \D_2(M),
\end{equation}
where $k = \ell(\lambda)$ is the length of $\lambda$.
Notice that the matrix $-\D_2(M)$ is a unipotent invariant matrix of type $(\lambda_1-1,\lambda_2-1,\dots, \lambda_k-1)$.

Given a partition $\lambda = (\lambda_1, \lambda_2, \dots, \lambda_k)$, 
we construct a finite sequence of $\lambda_1$ partitions inductively by setting 
$\lambda^{(1)}:= \lambda$, and for each $i\geq 1$, defining $\lambda^{(i+1)}$ to be the partition obtained  by 
removing the right-most box from each row of the Young diagram of $\lambda^{(i)}$.
We also consider the series of conjugate partitions $\mu^{(i)}$ of the $\lambda^{(i)}$.
Equivalently, the partition $\mu^{(i)}$ is obtained from $\mu = \mu^{(1)}$ by deleting the top $i - 1$ rows in the Young diagram of $\mu$.  That is, $\mu^{(i)} = (\mu_{i}, \mu_{i+1}, \dots, \mu_l)$.
	
In conjunction with the sequence of partitions constructed above, 
we construct a finite sequence of pairs of submatrices of $M$.  
First, $M^{(1)} = \D_1(M)$ and $ M_{\text{aux}}^{(1)} = \D_2(M)$.  
For $1 \leq i < \lambda_1$, set $M^{(i+1)} = \D_1(M_{\text{aux}}^{(i)})$ and 
$M_{\text{aux}}^{(i+1)} = \D_2(M_{\text{aux}}^{(i)})$.
Then, starting with $M=M^{(1)}$, successive application of the decomposition 
(\ref{E:determinantfactorization}) gives us 
\begin{align}\label{A:signeddeterminat}
\det M = \prod_{i=1}^l (-1)^{|\lambda^{(i)}|-\ell(\lambda^{(i)})} \det M^{(i)},
\end{align}
where $|\lambda^{(i)}|$ is the size, and $\ell(\lambda^{(i)})$ the length, of the partition $\lambda^{(i)}$.
Observe that $|\lambda^{(i)}| - \ell(\lambda^{(i)}) = |\lambda^{(i+1)}| = |\mu^{(i+1))}|$ and that the part $\mu_i$ occurs in $i - 1$ of $\mu^{(2)}, \mu^{(3)}, \dots, \mu^{(l)}$.  Therefore,
\begin{equation*}
\det M =  \prod_{i=1}^l (-1)^{|\mu^{(i+1)}|} \det M^{(i)} = \prod_{i=1}^l (-1)^{(i - 1) \mu_i} \det M^{(i)}.
\end{equation*}

Note that $(i - 1) \mu_i$ is odd if and only if $i$ is even and $\mu_i$ is odd.
At the same time, it follows from the iterated construction of $M^{(i)}$ that $M^{(i)} = \pm P_i$,
with $M^{(i)} = -P_i$ if and only if $i$ is even.
It follows that $\det M^{(i)} = - \det P_i$ precisely when $i$ is even and $\mu_i$ is odd,
yielding (\ref{eq:det formula})
\end{proof}

\begin{comment}

\begin{Example}\label{ex:det}
We return to Example \ref{ex:two cases}.  For $M$ given by (\ref{eqn:(2,2,1,1)}),
$$
P = \begin{pmatrix}
0 & c & 0 & 0\\
-c & 0 & 0 & 0\\
e & f & g & j\\
h & i & j & k
\end{pmatrix}.
$$
The conjugate partition of $\lambda = (2,2,1,1)$ is $\mu  =(4,2)$.  Theorem \ref{thm:det formula} gives
$$
\det M = \det  \begin{pmatrix}
0 & c & 0 & 0\\
-c & 0 & 0 & 0\\
e & f & g & j\\
h & i & j & k
\end{pmatrix}
\det \begin{pmatrix}
0 & c\\
-c & 0
\end{pmatrix} = c^4 (gk - j^2).
$$
Similarly, for $\widetilde{M}$ given by (\ref{eqn:(3,3,2)}),
$$
P = \begin{pmatrix}
b & 0 & 0 \\
-d & 0 & 0 \\
f & g & h
\end{pmatrix}
$$
and hence $\det \widetilde{M} = 0$.
\end{Example}

\end{comment}

\section{Rank of a unipotent fixed bilinear form}
\label{S:Rank}

The determinant formula proved in the previous section tells us if a unipotent form is degenerate or not.
When $\Sym_n^{\lambda}$, or $\Skew_n^\lambda$ do not contain any non-degenerate element, we are able to specify 
the rank of a generic $N_{\lambda}$-fixed form.

\begin{Theorem}\label{thm:rank formula}
The corank of a generic element of $\Sym^{\lambda}_n$ is equal to the number of even parts which 
appear an odd number of times in $\lambda$.
Similarly, the corank of a generic element of $\Skew^{\lambda}_n$ is equal to the number of odd parts which appear an odd number
of times in $\lambda$.
\end{Theorem}

Although the proof of the case of skew-symmetric forms is virtually the same as that of the symmetric forms, 
some definitions need to be modified. Therefore, instead of writing a separate proof, we prove the first claim only and 
indicate the places that need a modification in order for it to work for the latter.

The first claim of Theorem \ref{thm:rank formula} follows immediately from Lemmas \ref{lem:rank upper bound} 
and \ref{lem:rank lower bound} below.
To facilitate their proofs, we start with a definition.

\begin{Definition}

For symmetric forms, the {\it degeneracy number} of $\lambda$, 
denoted by $d(\lambda)$, is defined to be the number of even parts which appear an odd number of times in $\lambda$.  
Recall our alternative notation for a partition $\lambda = (1^{\alpha_1}, 2^{\alpha_2}, \dots, l^{\alpha_l})$, 
indicating that the parts of $\lambda$ are $\alpha_1$ 1's, $\alpha_2$ 2's, $\dots$, and $\alpha_l$ $l$'s.
For $1 \leq i \leq l$, define $\lambda^{[i]} = (1^{\alpha_1}, 2^{\alpha_2}, \dots, i^{\alpha_i})$ and $d_i(\lambda) = d(\lambda^{[i]})$.  
For example, if $\lambda = (2^3, 4^1) = (4,2,2,2)$, then $d(\lambda) = 2$ and $d_1(\lambda) = 0$, 
$d_2(\lambda) = d_3(\lambda) = 1$, $d_4(\lambda) = 2$.

For skew-symmetric forms, the degeneracy number of a partition $\lambda$ is defined to be the number of odd parts which 
appear odd number of times in $\lambda$. The rest of the definition is modified accordingly. 

\end{Definition}

\begin{Lemma}\label{lem:rank upper bound}
Let $M$ be the generic element of $\Sym_n^{\lambda}$. With respect to the vertical lines in its $\lambda$-decomposition,
let $M'$ be the matrix obtained by taking, in each column block, only the first $d_i(\lambda)$ columns of $M$ 
if the block corresponds to $i$. Then the null space of $M'$ has dimension $d(\lambda)$. 
In particular, $\text{corank}(M) \geq d(\lambda)$. The same claims are true for skew-symmetric forms.
\end{Lemma}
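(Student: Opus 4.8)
The plan is to exhibit an explicit $d(\lambda)$-dimensional space of vectors in the kernel of $M'$, and then show that these are essentially the only linear relations among the selected columns. First I would set up bookkeeping: the columns of $M'$ come in blocks, one block for each part $i$ appearing in $\lambda$, and from the block attached to part $i$ we retain the last $d_i(\lambda)$ columns. The key combinatorial observation is that $d_i(\lambda) - d_{i-1}(\lambda) \in \{0,1\}$, and it equals $1$ precisely when $i$ is even and $\alpha_i$ is odd; so the number of retained columns grows by one exactly at the ``bad'' even parts. For each such bad even part $i$, I would construct one explicit null vector. Here I would use the structure of the $A_{\lambda_j}$ and $B_{\lambda_j,\lambda_{j'}}$ blocks from Theorem \ref{thm:general case}: the last column of each $A_{p}$-block (respectively $B_{p,q}$-block) has a very sparse form — when $p$ is even the last column of $A_p$ is entirely zero, and the last columns of the off-diagonal $B$-blocks are supported only in their top entry or top few entries. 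This sparseness is what makes explicit cancellation feasible.

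The main construction: for a bad even part $i$, there are an odd number $\alpha_i$ of rows/columns of $M$ coming from parts equal to $i$; pair them up as $\alpha_i = 2r+1$ and take the candidate null vector to be an alternating sum of the ``new'' retained columns associated to these $\alpha_i$ identical blocks — concretely, the vector whose support is the set of last-column indices of the $i$-blocks, with coefficients $+1,-1,+1,\dots$ I would then verify $M' v = 0$ by checking it block-row by block-row: within the diagonal $A_i$-blocks the relevant entries vanish because the last column of an even $A_p$ is zero, and in every off-diagonal block-row the contributions from the paired identical $B$-blocks cancel in the alternating sum, since blocks $B_{\lambda_j, i}$ for the various copies of $i$ are literally the same matrix in distinct variables — but here I must be careful, since the variables are distinct, so the cancellation has to come not from equality of entries but from the fact that only the shared sparse pattern survives after restricting to the retained columns. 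This bookkeeping — organizing which entries of $Mv$ actually involve the retained columns and confirming they cancel — is the step I expect to be the main obstacle; it requires carefully tracking the interplay between the ``initial zeros'' and ``asymmetric links'' described at the start of Section \ref{S:proofs} and the truncation to the last $d_i(\lambda)$ columns.

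Having produced $d(\lambda)$ linearly independent null vectors (independence is immediate since their supports, the new-column index sets, are disjoint), it remains to show $\dim \ker M' \le d(\lambda)$, equivalently $\operatorname{rank} M' \ge (\text{number of retained columns}) - d(\lambda)$. For this I would locate, among the retained columns, a $\big((\#\text{retained}) - d(\lambda)\big)$-by-$\big((\#\text{retained}) - d(\lambda)\big)$ submatrix of $M'$ with nonzero determinant — a natural candidate being one built from the boxed ``upper rightmost entries'' analysis already used in the proof of Theorem \ref{thm:det formula}, restricted appropriately, so that its determinant is visibly a nonzero monomial (a product of the free variables $a_q$) in $\mathbb{K}(\mathcal{A})$. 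Concretely, the retained columns that are \emph{not} among the $d(\lambda)$ ``new bad'' ones should, together with a suitable choice of rows, yield a block-triangular matrix whose diagonal blocks are the anti-triangular pieces of the $A$'s and $B$'s and hence invertible. Then $\operatorname{corank}(M) \ge \operatorname{corank}(M') \ge \dim\ker M' = d(\lambda)$, since every column of $M'$ is a column of $M$, which gives the final inequality.
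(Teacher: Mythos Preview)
Your explicit construction of null vectors does not work, and this is a genuine gap rather than just bookkeeping. You propose that for each bad even part $i$ the vector supported on the last-column indices of the $\alpha_i$ blocks of size $i$, with coefficients $+1,-1,+1,\dots$, lies in $\ker M'$. But in the paper's own example $\lambda=(4,2,2,2)$ (Example \ref{ex:rank}), the three retained columns $C_6,C_8,C_{10}$ coming from the part $i=2$ satisfy no nontrivial linear relation by themselves: solving rows $5,7,9$ forces the coefficients to be proportional to $(q,-n,i)$, and then row $1$ gives the nonzero residue $qd-ng+il$. The actual null vector is
\[
(qd - ng + il)\,C_3 \;-\; bq\,C_6 \;+\; bn\,C_8 \;-\; bi\,C_{10} \;=\; 0,
\]
which is supported on a column from the part-$4$ block as well and has coefficients that are polynomials in the free variables, not $\pm 1$. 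You anticipated trouble here (``the variables are distinct, so the cancellation has to come not from equality of entries\dots''), but the sparse pattern alone does not force cancellation; one genuinely needs contributions from columns attached to \emph{larger} parts.

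The paper's proof takes a rather different route that sidesteps the need for explicit closed-form null vectors. It further truncates $M'$ to a square matrix $M''$ by selecting only the first $d_i(\lambda)$ rows from each row block, checks that $\ker M'=\ker M''$, and then coarsens the block decomposition of $M''$ at the smallest even part $i$ occurring an even number of times to get
\[
M'' \;=\; \begin{pmatrix} U'' & V'' \\ Z'' & W'' \end{pmatrix}
\]
with $Z''=0$. Induction applies to $U''$ and $W''$ separately (with $d(\lambda_U)=d(\lambda)-1$ and $d(\lambda_W)=1$). The $d(\lambda)-1$ null vectors of $U''$ extend to $M''$ trivially since $Z''=0$; the single null vector of $W''$ extends to $M''$ only after one shows that the relevant columns of $V''$ lie in the column space of $U''$, which the paper proves by an orthogonality argument against the explicitly located row relations of $U''$. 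This is exactly the mechanism producing the $C_3$-term in the example above. If you want to salvage your direct approach, you would need to build in this ``correction from larger parts'' step; otherwise, the inductive block decomposition with $Z''=0$ is the idea you are missing.
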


Before we start the proof let us illustrate the construction of $M'$. 
\begin{Example}\label{ex:rank}
Let $\lambda = (4,2,2,2)$. Then $d(\lambda) =2$. In this case, the generic element of $\Sym_{10}^{(4,2,2,2)}$ is
\begin{equation*}
M = \left( \begin{array}{ccccccccccccc}
0 & 0 & 0 & 0 & \vline & 0 & 0 & \vline & 0 & 0 & \vline & 0 & 0 \\
0 & 0 & 0 & b & \vline & 0 & 0 & \vline & 0 & 0 & \vline & 0 & 0 \\
0 & 0 & -b & 0 & \vline & 0 & d & \vline & 0 & g & \vline & 0 & l \\
0 & b & 0 & a & \vline & -d & c & \vline & -g & f & \vline & -l & k \\
\hline
0 & 0 & 0 & -d & \vline & 0 & 0 & \vline & 0 & i & \vline & 0 & n \\
0 & 0 & d & c & \vline & 0 & e & \vline & -i & h & \vline & -n & m \\
\hline
0 & 0 & 0 & -g & \vline & 0 & -i & \vline & 0 & 0 & \vline & 0 & q \\
0 & 0 & g & f & \vline & i & h & \vline & 0 & j & \vline & -q & p \\
\hline
0 & 0 & 0 & -l & \vline & 0 & -n & \vline & 0 & -q & \vline & 0 & 0 \\
0 & 0 & l & k & \vline & n & m & \vline & q & p & \vline & 0 & r
\end{array} \right).
\end{equation*}
Since the first block-column has four columns and since $d_4(\lambda) = 2$,
we take its first two columns. On the other hand, each of the second, third and the fourth block-columns have two columns. 
Since $d_2 (\lambda) = 1$, we take the first column of each. Therefore,
\begin{equation*}
M' = \left( \begin{array}{ccccccccccccc}
0 & 0 &\vline & 0 &\vline & 0 & \vline & 0  \\
0 & 0  &\vline & 0 &\vline & 0  & \vline & 0  \\
0 & 0  &\vline & 0 & \vline & 0 & \vline & 0 \\
0 & b  &\vline & -d &  \vline & -g & \vline & -l \\
\hline
0 & 0  &\vline & 0  &\vline & 0 & \vline & 0  \\
0 & 0  &\vline & 0  &\vline & -i &  \vline & -n \\
\hline
0 & 0 &\vline & 0 &\vline & 0 & \vline & 0  \\
0 & 0 &\vline & i  &\vline & 0 & \vline & -q  \\
\hline
0 & 0 & \vline & 0 & \vline & 0 &  \vline & 0  \\
0 & 0 &\vline & n  & \vline & q &  \vline & 0 
\end{array} \right).
\end{equation*}
Notice that the rank of $M'$ is 3. Since $M'$ is a linear map from $\mb{K}^5$ to $\mb{K}^8$, the dimension of its 
null space is $\dim \mb{K}^5 - \mt{rank}(M') = 5 -3 =2$.

\end{Example}

\begin{proof}
We argue by induction on $d(\lambda)$.
In order to make the induction work, we prove a slightly more complicated statement.
We use the horizontal lines in the $\lambda$-decomposition
to form another matrix $M''$ by taking, in each row block of $M'$ corresponding to the part $i$,
only the last $d_i(\lambda)$ rows. For example, for $M$ as in Example \ref{ex:rank}, 
\begin{equation}\label{E: M - double prime}
M'' = \left( \begin{array}{ccccccccccccc}
0 & 0  &\vline & 0 & \vline & 0 & \vline & 0 \\
0 & b  &\vline & -d &  \vline & -g & \vline & -l \\
\hline
0 & 0  &\vline & 0  &\vline & -i &  \vline & -n \\
\hline
0 & 0 &\vline & i  &\vline & 0 & \vline & -q  \\
\hline
0 & 0 &\vline & n  & \vline & q &  \vline & 0 
\end{array} \right).
\end{equation}

The statements we prove are:

{\it  
\begin{enumerate}
\item There are $d(\lambda)$ linearly independent row relations in $M''$ that can be described explicitly in the following sense.
Let $m_i$ denote the number of even parts $\leq i$ that occur an odd number of times in $\lambda$.
Then, for each even part $i$ that occurs an odd number of times, there is a relation among the $m_i$-th rows from the bottom 
of the block-rows associated with $i$. 

\item The null spaces of $M'$ and $M''$ are the same, of dimension $d(\lambda)$. 
\end{enumerate}

The corresponding statements for skew-symmetric forms are similar. Consider the number $m_i$ of odd parts $\leq i$ 
that occur an even number of times. Then, for each odd part $i$ that occurs an odd number of times, 
there is a relation among the $m_i$-th rows from the bottom of the blocks corresponding to the part $i$ of $\lambda$. Furthermore,
the null spaces of $M$ and $M''$ are the same, of dimension $d(\lambda)$.
}

Let us illustrate the first claim for $M''$ as in (\ref{E: M - double prime}).
There are three block-rows corresponding to $i=2$, each of which has only one row. Since 
$m_2= 1$, this gives a relation among the third, fourth and the fifth rows of $M''$. 
For $i=4$, there is a single block-row in $M''$ with two rows and $m_4=2$. 
Therefore, there is a single relation for the first row of $M''$, which implies that the row is zero.

Now we proceed with the inductive argument. Note that everything to be proved is now in terms of the smaller matrix $M''$.
Let $s$ be the smallest even part that occurs an odd number of times.
We coarsen the block decomposition of $M''$ into simply
$$
M'' = \left( \begin{array}{ccc}
U'' & \vline & V'' \\
\hline
Z'' & \vline & W''
\end{array} \right).
$$
The lines divide between the blocks corresponding to parts $> s$ and those corresponding to parts  $\leq s$. 
(Above the horizontal line are the blocks corresponding to the parts $>s$.)
For example, $s=2$ is the smallest even part that appears an odd number of times in $\lambda = (4,2,2,2)$. Therefore, 
the corresponding $M''$ is given by 
\begin{equation*}
M'' = \left( \begin{array}{ccccccccccccc}
0 & 0  &\vline & 0 & 0 &  0 \\
0 & b  &\vline & -d & -g & -l \\
\hline
0 & 0  &\vline & 0  & -i &  -n \\
0 & 0 &\vline & i  & 0 & -q  \\
0 & 0 &\vline & n  & q & 0 
\end{array} \right).
\end{equation*}
For a skew-symmetric form the coarsening of $M''$ is obtained by considering the smallest odd part 
which appears odd number of times.

We claim that $Z'' = 0$ in general.
To see this, note that every block in $Z''$ consists of the first $d_j(\lambda)$ columns of a matrix $B_{j,k}^\mathsf{T}$
for some $j > i \geq k$. Since the first $j - k$ columns of $B_{j,k}^\mathsf{T}$ are zero, and 
since $d_j(\lambda) \leq j - i \leq j -k$, we find $Z'' = 0$.

Let us split the partition $\lambda$ into two sub-partitions $\lambda_U$ and $\lambda_W$ in accordance with the 
coarsening of $M''$, as follows; $\lambda_U := ((s+1)^{\alpha_{s+1}}, (s+2)^{\alpha_{s+2}}, \dots, l^{\alpha_l})$
and $\lambda_W := \lambda^{[s]} = (1^{\alpha_1}, 2^{\alpha_2}, \dots, s^{\alpha_s})$.
Denote by $U$ the generic element of $\Sym_{|\lambda_U|}^{\lambda_U}$, and similarly, denote by $W$ the generic element of 
$\Sym_{| \lambda_W|}^{\lambda_W}$. Observe that the blocks $U''$ and $W''$ of $M''$ 
are formed from $U$ and $W$ in the same manner as $M''$ is formed from $M$.
Since, $d(\lambda_U) = d(\lambda) - 1$ and $d(\lambda_W) = 1$,
we apply the inductive hypothesis to $U''$ and $W''$.

We first prove the claim about the row relations in $M''$.
There is a single relation among the last rows of the row-blocks in $W''$ that come from the part $s$.  
Since $Z'' = 0$, this gives a corresponding row relation in $M''$.
For each even part $j$ occurring an odd number of times with $j>s$, there are row relations in the bottom 
$(m_j - 1)$-th rows of the row-blocks of the submatrix $U''$. 
However, because of the form of the matrices $B_{p,q}$ (c.f. (\ref{E: general form of a unipotent fixed matrix}))
the corresponding rows in $V''$ consists of zeros only. In other words, the row relations of $U''$ extend to 
row relations for $M''$. Together with the other relation found above, this gives $d(\lambda)$ 
linearly independent row relations with the claimed form.

Now we proceed to prove the second claim. 
The statement that the null space of $M'$ is the same as that of $M''$ is proven directly, without induction. Indeed, 
looking at (\ref{E: general form of a unipotent fixed matrix}), the general form of a unipotent fixed matrix, we see that 
in each block there are no non-zero entries above the main antidiagonal. Therefore, the rows in $M''$ that are deleted 
in order to obtain $M'$ are all zero rows. This proves the claim.

To calculate the rank of $M''$ first notice that there are $d(\lambda) - 1$ independent column 
relations in $U''$ and since $Z'' = 0$, this produces $d(\lambda) - 1$ independent column relations in $M''$.
There is also another column relation among the first columns in each block of $W''$ corresponding to the part $s$.
Let us call the columns occurring in this last relation {\it distinguished columns}.

Of course, the corresponding columns in $V''$ are {\it not} zero, so we cannot immediately extend this relation to one in $M''$.
Instead, we show that we can use certain additional columns of $M''$ to obtain a column relation.
To do this, it suffices to show that all of the distinguished columns of $V''$ lie in the column space of $U''$.
For then we can add a linear combination of columns in $U''$ to the linear combination of distinguished columns in $V''$ to produce zero.
The same linear combination of the full columns in $M''$ (obtained by simply adding $0$'s at the bottom) plus the combination of distinguished columns in $M''$ will produce zero as well.
This yields an independent column relation, giving a total of $d(\lambda)$ linearly independent column relations in $M''$.
In other words, we have $d(\lambda)$ linearly independent vectors in the null space of $M''$.

To prove that the distinguished column space of $V''$ is contained in the column space of $U''$, it is enough to show that the distinguished column space of $V''$ is orthogonal to the kernel of $U''^{\mathsf{T}}$, which we interpret as the space of row relations in $U''$.
Now since the row relations in $U''$ always involve the bottom $m_j$-th rows from the various blocks and each $m_j \geq 2$,
while the nonzero entires in the distinguished columns of $V''$ are always located in the first rows of each block, the claim follows immediately.

\end{proof}

\begin{Lemma}\label{lem:rank lower bound}
Let $M$ be the generic element of $\Sym_n^{\lambda}$.
There exists a non-zero minor of size $(n - d(\lambda))$-by-$(n - d(\lambda))$.  
In particular, $\text{corank}(M) \leq d(\lambda)$. The same claim is true for skew-symmetric forms. 
\end{Lemma}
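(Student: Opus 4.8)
The plan is to exhibit an explicit nonzero minor of size $(n - d(\lambda)) \times (n - d(\lambda))$, and the natural candidate is the complement of the rows and columns deleted in Lemma \ref{lem:rank upper bound}. That is, I would take $M'$ to be the $n \times (n - d(\lambda))$ matrix obtained (as in Lemma \ref{lem:rank upper bound}) by keeping only the last $d_i(\lambda)$ columns in each column block corresponding to the part $i$, and then show that the $(n-d(\lambda)) \times (n-d(\lambda))$ submatrix $\widehat{M}$ of $M$ obtained by \emph{deleting} those same columns (keeping the first $\lambda_{(\text{block})} - d_i(\lambda)$ columns in each block), together with a suitably chosen set of $n - d(\lambda)$ rows, has nonzero determinant. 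The key observation that makes this plausible is that the determinant computation in Theorem \ref{thm:det formula} already factors $\det M$ through the submatrices $P_i$, and the parts $P_i$ that are identically singular are exactly the ones indexed by even $\mu_i$; so one expects that removing the right columns and rows kills precisely the forced vanishing and leaves a product of nonzero factors.

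More concretely, the approach I would take is to mirror the recursive $\D_1/\D_2$ decomposition from the proof of Theorem \ref{thm:det formula}, but now applied to the rectangular truncations. First I would identify which rows to keep: using the row-relation description from Lemma \ref{lem:rank upper bound}, there are exactly $d(\lambda)$ explicit linearly independent row relations in $M''$, each supported on the $m_i^{\text{th}}$ rows of the blocks attached to a given even part occurring an odd number of times; I would delete one row from each such relation (say the top row appearing in it). Then I would argue, by the same "cofactor expansion along columns containing a unique nonzero entry" trick used for (\ref{E:determinantfactorization}), that the resulting square submatrix $\widehat{M}$ has determinant equal, up to sign, to a product $\prod_i \det \widehat{P_i}$ of minors of the matrices $C_m$, $A_m$, $B_{p,q}$, with each offending even factor now replaced by an \emph{odd-sized} skew-symmetric-type block whose relevant minor is a nonzero monomial (a power of the appropriate $\pm a_q$), exactly as in Proposition \ref{prop:one part partition} where $\det A_n = \pm a_m^{\,?}$ is nonzero for $n$ odd. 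Tracking the bookkeeping via the conjugate partition $\mu$ and the sequence $\lambda^{(i)}$ should show the total count of deleted rows and columns is $d(\lambda)$ on each side.

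The hard part will be the bookkeeping: making precise which rows and columns to delete so that (a) the deleted columns are exactly those kept in $M'$ (so the count is $d(\lambda)$), (b) the deleted rows still leave a square matrix whose column-expansion factorization goes through cleanly — in particular one must check that after deleting those rows, the "unboxed entries in a boxed column are zero" property that powers (\ref{E:determinantfactorization}) is preserved at every stage of the recursion — and (c) the leftover diagonal-type factors are genuinely nonzero as polynomials in the $a$'s. I expect (b) to be the real obstacle, since deleting rows associated to row relations could in principle disturb the clean block structure exploited in the $\D_2$ step; the resolution should be that those rows lie in blocks corresponding to even parts and, by the explicit form of $A_n$ and $B_{p,q}$, their removal only trims a block $A_{\lambda_i}$ or $C_{\lambda_i - 1}$ of even size down to one of odd size, which is precisely a member of the same family with a nonvanishing top-order minor. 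Once the structure is confirmed stable under the recursion, evaluating the leftover product at a convenient specialization of the $a$'s (e.g. setting the "corner" variables $a_q$ to $1$ and the rest to $0$) gives a nonzero value, completing the proof.
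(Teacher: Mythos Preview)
Your plan contains a concrete error at the very first step: the matrix $M'$ of Lemma~\ref{lem:rank upper bound} is \emph{not} $n\times(n-d(\lambda))$. It keeps the last $d_i(\lambda)$ columns in each block of size $i$, so it has $\sum_j d_{\lambda_j}(\lambda)$ columns, which in general is strictly larger than $d(\lambda)$ (in the paper's example $\lambda=(4,2,2,2)$ one gets $5$ columns, not $2$). Consequently ``deleting those same columns'' leaves you with far fewer than $n-d(\lambda)$ columns, and the minor you describe is the wrong size. So the whole complement-of-$M'$ heuristic, and hence the bookkeeping program built on it, does not get off the ground.

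Separately, even after fixing the column count, routing the argument through the $\D_1/\D_2$ recursion is much heavier machinery than the problem needs, and your own worry (b) is real: once you delete rows tied to the row relations, the ``boxed column has only one nonzero entry'' property that drives (\ref{E:determinantfactorization}) is not obviously stable through the recursion. The paper bypasses all of this with a direct, one-paragraph monomial argument. It coarsens the $\lambda$-decomposition by merging blocks corresponding to \emph{equal} parts, so that each diagonal super-block collects all copies of a single part size; since the variables in distinct super-blocks are disjoint, it suffices to find the desired minor in each super-block separately. For an odd part, or an even part occurring an even number of times, the main antidiagonal entries of the super-block are all nonzero and their product is a monomial appearing exactly once in the determinant expansion, so the full block is nonsingular. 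For an even part occurring an odd number of times, the only antidiagonal zeros sit in the central $A$-block; replacing those by the entries just above the antidiagonal in that central block (and dropping one row and one column accordingly) again gives a monomial appearing exactly once, showing that super-block has corank at most $1$. Summing over super-blocks gives corank at most $d(\lambda)$. This is both shorter and avoids the recursion entirely; I would recommend abandoning the $\D_1/\D_2$ plan in favor of this antidiagonal-monomial argument.
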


\begin{proof}
	
We prove the existence of a non-zero minor of the specified size
by finding a non-zero monomial  term in the minor expansion that occurs only once,
so that no cancellation can occur.
To do this, we use a slightly weaker decomposition of $M$ than its $\lambda$-decomposition.  
In the $\lambda$-decomposition of $M$,
remove any horizontal and vertical lines  that divide two equal parts of $\lambda$.
We then use the diagonal blocks of this decomposition to prove the result; 
since the variables in each block are distinct,
it suffices to prove the corresponding result for a single such diagonal block.
	
If the part of $\lambda$ is odd, then it is easy to see that all of the main antidiagonal terms are nonzero 
and that their product is a desired monomial.  (For a skew-symmetric $M$, if the part of $\lambda$ is even, then the product of the 
main antidiagonal terms gives the desired monomial.)

Similarly, if the part of $\lambda$ is even and occurs an even number of times,
then all of the main antidiagonal terms are nonzero and their product is the desired monomial.
(For a skew-symmetric $M$, if the part of $\lambda$ is odd and repeated even number times, then 
then all of the main antidiagonal terms are nonzero and their product is the desired monomial.)

On the other hand, if the part of $\lambda$ is even and occurs an odd number of times,
then all of the main antidiagonal terms are nonzero except those in the middle block.
But the antidiagonal terms just above the main antidiagonal of the middle block are nonzero,
so the product of all of these entries gives the desired monomial,
proving that this matrix has corank at most 1.
(The same argument works for a skew-symmetric $M$, if the part of $\lambda$ is odd and repeated odd number 
of times.)

\end{proof}

\begin{Example}

We continue with Example \ref{ex:rank}.

\begin{equation*}
M = \left( \begin{array}{ccccccccccccc}
0 & 0 & 0 & 0 & \vline & 0 & 0 & \vline & 0 & 0 & \vline & 0 & 0 \\
0 & 0 & 0 & b & \vline & 0 & 0 & \vline & 0 & 0 & \vline & 0 & 0 \\
0 & 0 & -b & 0 & \vline & 0 & d & \vline & 0 & g & \vline & 0 & l \\
0 & b & 0 & a & \vline & -d & c & \vline & -g & f & \vline & -l & k \\
\hline
0 & 0 & 0 & -d & \vline & 0 & 0 & \vline & 0 & i & \vline & 0 & n \\
0 & 0 & d & c & \vline & 0 & e & \vline & -i & h & \vline & -n & m \\
\hline
0 & 0 & 0 & -g & \vline & 0 & -i & \vline & 0 & 0 & \vline & 0 & q \\
0 & 0 & g & f & \vline & i & h & \vline & 0 & j & \vline & -q & p \\
\hline
0 & 0 & 0 & -l & \vline & 0 & -n & \vline & 0 & -q & \vline & 0 & 0 \\
0 & 0 & l & k & \vline & n & m & \vline & q & p & \vline & 0 & r
\end{array} \right).
\end{equation*}

The matrix $M$ has rank $8$, with column relations
\begin{equation}\label{eq:rank column relation 1}
(qd - ng + il)C_2 - bqC_5 + bnC_7 - biC_{9} = 0
\end{equation}
\begin{equation}\label{eq:rank column relation 2}
C_1 = 0,
\end{equation}
where $C_i$ denotes the $i$-th column of $M$.

In the proof of Lemma \ref{lem:rank upper bound}, 
we find the column relations in $U''$ and an additional column relation in $W''$.
The first column of $U''$ being zero implies (\ref{eq:rank column relation 2}).
There is a single relation among the columns of $W''$,
namely $q C^{W''}_1 - n C^{W''}_2 + i C^{W''}_3 = 0$.
Moreover, in this example, the column space of $V''$ is the same as that of $U''$, therefore assuring a relation among 
$C^{M''}_3$, $C^{M''}_4$, and $C^{M''}_5$.
Transporting that relation back to $M$ gives (\ref{eq:rank column relation 1}).

In the proof of Lemma \ref{lem:rank lower bound}, we consider the minors that uses rows and columns 2,3,4 as well as 
5,6,8,9,10.  The relevant monomials are $-b^3$ and $n^4j$. Thus, the combined minor of size 3+5=8 and 
it has determinant $-b^3n^4j$.
\end{Example}

\begin{Corollary}\label{cor:det 0}
The determinant of the generic element of $\Sym_n^{\lambda}$ is zero if and only if every even part 
which occurs in $\lambda$ occurs an even number of times. Similarly, the determinant of the generic element of 
$\Skew_n^{\lambda}$ is zero if and only if every odd part which occurs in $\lambda$ occurs an even number of times.
\end{Corollary}

\bibliography{References}

\begin{thebibliography}{10}

\bibitem{CJ10}
Mahir~Bilen Can and Michael Joyce.
\newblock A q-analog of the fibonacci numbers via complete quadrics.
\newblock http://arxiv.org/abs/1010.6119.

\bibitem{GoodmanWallach}
Roe Goodman and Nolan~R. Wallach.
\newblock {\em Symmetry, representations, and invariants}, volume 255 of {\em
  Graduate Texts in Mathematics}.
\newblock Springer, Dordrecht, 2009.

\bibitem{He06}
Xuhua He.
\newblock Unipotent variety in the group compactification.
\newblock {\em Adv. Math.}, 203(1):109--131, 2006.

\bibitem{HeLusztig06}
Xuhua He and George Lusztig.
\newblock Singular supports for character sheaves on a group compactification.
\newblock {\em Geom. Funct. Anal.}, 17(6):1915--1923, 2008.

\bibitem{HeThomsen06}
Xuhua He and Jesper~Funch Thomsen.
\newblock Closures of {S}teinberg fibers in twisted wonderful
  compactifications.
\newblock {\em Transform. Groups}, 11(3):427--438, 2006.

\bibitem{HottaShimomura}
R.~Hotta and N.~Shimomura.
\newblock The fixed-point subvarieties of unipotent transformations on
  generalized flag varieties and the {G}reen functions. {C}ombinatorial and
  cohomological treatments cent.
\newblock {\em Math. Ann.}, 241(3):193--208, 1979.

\bibitem{Lusztig81}
G.~Lusztig.
\newblock Green polynomials and singularities of unipotent classes.
\newblock {\em Adv. in Math.}, 42(2):169--178, 1981.

\bibitem{Lusztig04}
G.~Lusztig.
\newblock Parabolic character sheaves. {II}.
\newblock {\em Mosc. Math. J.}, 4(4):869--896, 981, 2004.

\bibitem{Spaltenstein77}
N.~Spaltenstein.
\newblock On the fixed point set of a unipotent element on the variety of
  {B}orel subgroups.
\newblock {\em Topology}, 16(2):203--204, 1977.

\bibitem{Springer76}
T.A. Springer.
\newblock Trigonometric sums, {G}reen's functions of finite groups and
  representations of {W}eyl groups.
\newblock {\em Invent. Math.}, 36:173--207, 1976.

\bibitem{Springer78}
T.A. Springer.
\newblock A construction of representations of {W}eyl groups.
\newblock {\em Invent. Math.}, 44(3):279--293, 1978.

\bibitem{Springer06}
Tonny~A. Springer.
\newblock Some results on compactifications of semisimple groups.
\newblock In {\em International {C}ongress of {M}athematicians. {V}ol. {II}},
  pages 1337--1348. Eur. Math. Soc., Z\"urich, 2006.

\bibitem{Steinberg76}
R.~Steinberg.
\newblock On the desingularization of the unipotent variety.
\newblock {\em Invent. Math.}, 36:209--224, 1976.

\bibitem{Steinberg88}
Robert Steinberg.
\newblock An occurrence of the {R}obinson-{S}chensted correspondence.
\newblock {\em J. Algebra}, 113(2):523--528, 1988.

\end{thebibliography}
\bibliographystyle{plain}

\end{document}